\begin{document}
\title{An upper bound on the sum of signs of permutations with a condition on their prefix sets}
\author{Nikola Djoki\'c\\
ETH Z\"urich}
\date{December 2, 2013}
\maketitle

\newtheorem{thr}{Theorem}
\newtheorem{lemma}{Lemma}
\newtheorem{prop}{Proposition}
The following Proposition is a positive answer to a question about cancellations
between permutations that arises in a model problem in the many body theory of Fermions. 
It concerns the mathematically rigorous implementation of 
the Pauli exclusion principle. See [1], Question 4.1.
\begin{prop}
Let $(x_1, x_2,\ldots,x_n)$ and $(y_1, y_2,\ldots,y_n)$ be two tuples of real numbers.
Give a weight $\epsilon_\sigma$ to every permutation $\sigma$ of $\{1,2,\ldots,n\}$ as follows:
\begin{equation}\epsilon_\sigma = \begin{cases}

0 & \text{if }   
x_{\sigma(1)} + x_{\sigma(2)} + \cdots +x_{\sigma(k)} < y_k
\text{ for some } k \in \{1,\ldots,n\}
\cr
{\rm sign}(\sigma), &\text{otherwise}
\cr\end{cases}
\end{equation}
Then
$$
\sum\limits_{\sigma \in S_n} \epsilon_\sigma \leq \sqrt{n}^n
$$
\end{prop}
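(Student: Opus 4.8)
The plan is to realize the signed restricted sum as the top-degree coefficient of a vector built step by step in the exterior algebra $\Lambda(\mathbb{R}^n)$, and then to bound its norm by exploiting that wedging with a fixed vector is a bounded operator. Let $\Sigma=\sum_{\sigma\in S_n}\epsilon_\sigma$, let $e_1,\dots,e_n$ be the standard orthonormal basis of $\mathbb{R}^n$, so that the monomials $e_I=e_{i_1}\wedge\cdots\wedge e_{i_m}$ (for $i_1<\cdots<i_m$) form an orthonormal basis of $\Lambda(\mathbb{R}^n)$ and $e_{\sigma(1)}\wedge\cdots\wedge e_{\sigma(n)}=\mathrm{sign}(\sigma)\,e_{[n]}$, where $e_{[n]}=e_1\wedge\cdots\wedge e_n$ spans $\Lambda^n$. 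Put $\xi=e_1+\cdots+e_n$. I would define $\Psi_0=1\in\Lambda^0$ and, for $k=1,\dots,n$, set $\Psi_k=D_k\,(\Psi_{k-1}\wedge\xi)$, where $D_k$ is the diagonal operator multiplying each basis vector $e_I$ by $\mathbf 1[\sum_{i\in I}x_i\ge y_k]$.

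The first task is to check that $\Psi_n=\Sigma\,e_{[n]}$. Right-wedging by $\xi$ sends a monomial to the sum over all ways of appending one new index, since $e_a\wedge e_a=0$ automatically discards indices already present; thus $\Psi_{k-1}\wedge\xi$ enumerates exactly the legal one-step extensions and carries the permutation sign for free. The operator $D_k$ then keeps only those partial words whose length-$k$ prefix set $I=\{\sigma(1),\dots,\sigma(k)\}$ satisfies $\sum_{i\in I}x_i\ge y_k$. The essential point here is that this constraint depends only on the \emph{support} $I$ and not on the order in which $I$ was assembled, which is precisely why it acts diagonally in the $e_I$ basis. Unwinding the recursion, the surviving contributions to $\Psi_n\in\Lambda^n$ are exactly the monomials $e_{\sigma(1)}\wedge\cdots\wedge e_{\sigma(n)}$ for which all $n$ prefix conditions hold, i.e. the permutations with $\epsilon_\sigma=\mathrm{sign}(\sigma)$; collecting signs gives $\Psi_n=\big(\sum_{\sigma}\epsilon_\sigma\big)e_{[n]}=\Sigma\,e_{[n]}$, so that $|\Sigma|=\|\Psi_n\|$.

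It remains to bound the norm. Each $D_k$ is a diagonal $0/1$ multiplier, hence $\|D_k\|\le 1$. The crux is the lemma $\|\xi\wedge(\cdot)\|\le\sqrt n$: the adjoint of $\xi\wedge(\cdot)$ is contraction $\iota_\xi$, and the Clifford anticommutation identity $\iota_\xi(\xi\wedge\alpha)+\xi\wedge(\iota_\xi\alpha)=\langle\xi,\xi\rangle\,\alpha=n\,\alpha$ displays the two positive operators $(\xi\wedge)(\xi\wedge)^\ast$ and $(\xi\wedge)^\ast(\xi\wedge)$ as summing to $n\,\mathrm{Id}$; each is therefore $\le n\,\mathrm{Id}$, giving $\|\xi\wedge\|^2\le n$. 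Consequently $\|\Psi_k\|\le\sqrt n\,\|\Psi_{k-1}\|$, and since $\|\Psi_0\|=1$ we conclude $|\Sigma|=\|\Psi_n\|\le(\sqrt n)^{\,n}$, which implies the claimed bound. The main obstacle I anticipate is the sign bookkeeping—verifying carefully that iterated right-wedging by $\xi$ reproduces $\mathrm{sign}(\sigma)$ and that support-only dependence makes each $D_k$ genuinely diagonal—together with the short verification of $\|\xi\wedge\|\le\sqrt n$, which is exactly where the exponent in $\sqrt n^{\,n}$ originates.
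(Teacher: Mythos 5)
Your proof is correct and is essentially the paper's argument specialized to the indicator function $f_{xy}$: your recursion $\Psi_k=D_k(\Psi_{k-1}\wedge\xi)$ is exactly the paper's identity $g\vert_{G_k}=\sqrt{n}\,P(R(g\vert_{G_{k-1}}))$ with $\xi=\sqrt{n}\,\alpha$, followed by the same submultiplicativity-of-operator-norms bound. The only minor divergence is how the norm of wedging is controlled: you use the anticommutation identity $\iota_\xi(\xi\wedge\varphi)+\xi\wedge(\iota_\xi\varphi)=n\varphi$ to get $\|\xi\wedge\cdot\|\le\sqrt{n}$, whereas the paper extends $\alpha$ to an orthonormal basis of $\mathbb{R}^n$ and uses the Gram-determinant lemma to show $R^*R$ is an orthogonal projection, hence $|R|_{op}\le 1$.
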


The Propositon is a consequence of the following Theorem.

\begin{thr}
Fix a natural number $n$ and set $N=\{1,2,\ldots,n\}$. Let $f$ be any function
on the power set $\mathcal P(N)$ which takes values in $[-1,1]$ and define
$g:\mathcal P(N) \to \mathbb{R}$ by
\begin{equation}
g(T) = \sum\limits_{\sigma\in S_k} {\rm sign}(\sigma) \prod\limits_{i=1}^k 
f( \{t_{\sigma(1)},\ldots,t_{\sigma(i)}\})
\qquad  {\rm if \ }
T=\{t_1,\ldots,t_k\} \ {\rm with\ } t_1<\cdots<t_k
\end{equation}
Then 
$$
|g(N)| \le \sqrt{n}^n
$$
\end{thr}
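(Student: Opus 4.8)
The plan is to turn the alternating sum defining $g(N)$ into a statement about operator norms of signed boundary maps on the Boolean lattice, and then to telescope those bounds in the manner of the proof of Hadamard's inequality, which is the natural source of the value $\sqrt n^{\,n}=n^{n/2}$.

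First I would establish a recursion for $g$ by peeling off the last element of each ordering. Reading $g(T)$ as a sum over linear orderings $(s_1,\dots,s_k)$ of $T$, with sign taken relative to the increasing order and weight $\prod_{i=1}^k f(\{s_1,\dots,s_i\})$, I fix the final element $s_k=t_j$. The last prefix set is then always $T$, contributing a factor $f(T)$, while the remaining factors depend only on the induced ordering of $T\setminus\{t_j\}$. A count of inversions shows the detached sign is $(-1)^{r(t_j,T)}$, where $r(t,T)$ denotes the number of elements of $T$ exceeding $t$. This gives
\begin{equation}
g(T)=f(T)\sum_{t\in T}(-1)^{r(t,T)}\,g(T\setminus\{t\}).
\end{equation}

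Next I would read this recursion as one matrix identity organized by the layers of $\mathcal P(N)$. Let $C^k=\mathbb R^{\binom nk}$ have coordinates indexed by the $k$-subsets of $N$, let $D_k$ be multiplication by $f(T)$ (a diagonal operator on $C^k$ with entries in $[-1,1]$), and let $A_k:C^{k-1}\to C^k$ be the signed coboundary map with $(A_k)_{T,S}=(-1)^{r(t,T)}$ when $S=T\setminus\{t\}$ and $0$ otherwise. With $v_0=(g(\emptyset))=(1)$, the recursion reads $v_k=D_kA_kv_{k-1}$, so that
$$
g(N)=v_n=D_nA_nD_{n-1}A_{n-1}\cdots D_1A_1\,v_0 .
$$
Since $\|D_k\|_{op}\le 1$, it suffices to prove $\|A_k\|_{op}\le\sqrt n$ for every $k$; chaining the $n$ factors then yields $|g(N)|\le(\sqrt n)^{\,n}$.

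The crux, and the step I expect to be the main obstacle, is the bound $\|A_k\|_{op}\le\sqrt n$. I would obtain it from the exact identity
$$
A_k^{\mathsf T}A_k+A_{k-1}A_{k-1}^{\mathsf T}=n\,I \qquad\text{on }C^{k-1},
$$
which can be read as saying that the combinatorial Laplacian of the full simplex on $n$ vertices equals $nI$. The diagonal entries are immediate: the first term contributes $n-k+1$ (the number of $k$-sets containing a fixed $(k-1)$-set) and the second contributes $k-1$ (the number of $(k-2)$-subsets of a fixed $(k-1)$-set), summing to $n$. The real content is that the off-diagonal entries cancel: for $(k-1)$-sets $S=R\cup\{a\}$ and $S'=R\cup\{b\}$ with $a<b$ and $T=R\cup\{a,b\}$, the two terms carry signs $(-1)^{r(a,T)+r(b,T)}$ and $(-1)^{r(a,S)+r(b,S')}$, and a short parity count (the element $b$ contributes one extra inversion in $T$ but in neither $S$ nor $S'$) shows these differ by a factor $-1$ and so cancel; all remaining off-diagonal entries vanish because the supports are disjoint. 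Since $A_{k-1}A_{k-1}^{\mathsf T}\succeq 0$, the identity forces $A_k^{\mathsf T}A_k\preceq nI$, so every singular value of $A_k$ is at most $\sqrt n$, and the telescoping estimate above completes the proof.
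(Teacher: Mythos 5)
Your proof is correct, and it shares the paper's overall architecture while substituting a genuinely different key lemma at the crucial step. Your recursion for $g(T)$ is exactly the paper's Lemma 1 (proved there via the explicit bijection $\phi:S_{k-1}\times\{1,\ldots,k\}\to S_k$ inserting the last element), and your factorization $g(N)=D_nA_nD_{n-1}A_{n-1}\cdots D_1A_1v_0$ is precisely the paper's identity $g\vert_{G_k}=\sqrt{n}\,P(R(g\vert_{G_{k-1}}))$: your $A_k$ is the matrix of $\sqrt{n}\,R$ between levels $k-1$ and $k$ of $V=\bigoplus_{k}\bigwedge^k\mathbb{R}^n$, where $R$ is wedge multiplication by the unit vector $\alpha=\frac{1}{\sqrt n}(e_1+\cdots+e_n)$, and your $D_k$ is the paper's diagonal operator $P$. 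The divergence is in how the norm bound on the signed incidence operator is established. The paper proves $|R|_{op}\le 1$ geometrically: it extends $\alpha$ to an orthonormal basis of $\mathbb{R}^n$, invokes the Gram determinant identity $\langle v_1\wedge\cdots\wedge v_k,\,w_1\wedge\cdots\wedge w_k\rangle=\det(\langle v_i,w_j\rangle)$ (its Lemma 2) to conclude the induced wedge basis of $V$ is orthonormal, and deduces that $R^*R$ is an orthogonal projection, i.e., $R$ is a partial isometry. You instead prove $\|A_k\|_{op}\le\sqrt n$ from the anticommutation identity $A_k^{\mathsf T}A_k+A_{k-1}A_{k-1}^{\mathsf T}=nI$ together with $A_{k-1}A_{k-1}^{\mathsf T}\succeq 0$; your verification is sound (diagonal entries $n-k+1$ and $k-1$ summing to $n$; off-diagonal cancellation because $b$ contributes one extra inversion in $T=R\cup\{a,b\}$ that appears in neither $S$ nor $S'$, flipping the parity; the degenerate case $k=1$ works with $A_0=0$, and $g(\emptyset)=1$ gives $\|v_0\|=1$). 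Your identity is of course the fermionic CAR relation $R^*R+RR^*=|\alpha|^2 I$ scaled by $n$, so the two arguments are two faces of the same structure; what yours buys is self-containedness and elementarity --- finite matrices, entry counting, and positive semidefiniteness, with no exterior-algebra machinery or determinant lemma needed --- and it makes explicit the anticommutation relations underlying the Fermi-liquid motivation, while the paper's route is shorter once the wedge formalism is set up and yields the sharper structural fact that $R$ is exactly a partial isometry rather than merely a contraction.
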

To see that the proposition follows from the theorem, choose as $f$ the function
\begin{equation}
f_{xy}(S)=
\begin{cases}
1,&\text{if }\sum\limits_{j\in S}x_j\geq y_{|S|}\\
0,&\text{otherwise}
\end{cases}
\end{equation}
To prove the theorem, we will first need some lemmas.

\begin{lemma}\begin{equation}\forall T\in\mathcal P(N)\setminus\emptyset\colon\quad g(T)=f(T)\sum\limits_{a\in T}g(T\setminus\{a\})(-1)^{\vert\{t\in T,t>a\}\vert}\end{equation}\end{lemma}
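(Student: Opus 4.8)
The plan is to exploit the fact that for every permutation $\sigma \in S_k$ the final prefix set $\{t_{\sigma(1)},\ldots,t_{\sigma(k)}\}$ is all of $T$, so the factor $f(\{t_{\sigma(1)},\ldots,t_{\sigma(k)}\})=f(T)$ is common to every term and can be pulled out of the product. This leaves
$$g(T) = f(T)\sum_{\sigma \in S_k}{\rm sign}(\sigma)\prod_{i=1}^{k-1}f(\{t_{\sigma(1)},\ldots,t_{\sigma(i)}\}).$$
I would then organize the remaining sum according to the value $\sigma(k)=j$, that is, according to which element $a=t_j$ of $T$ the permutation lists last. For a fixed such choice, $a$ is absent from every one of the surviving prefix sets, each of which is a subset of $T\setminus\{a\}$, so the goal is to recognize the inner sum as a copy of $g(T\setminus\{a\})$, up to a sign.

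To make this precise I would introduce the order-preserving relabeling $\phi$ carrying $\{1,\ldots,k\}\setminus\{j\}$ onto $\{1,\ldots,k-1\}$ and set $\tau(i)=\phi(\sigma(i))$ for $i=1,\ldots,k-1$. Writing $s_1<\cdots<s_{k-1}$ for the elements of $T\setminus\{a\}$ in increasing order, the relabeling is designed exactly so that $t_{\sigma(i)}=s_{\tau(i)}$; hence the prefix sets coincide termwise and the product becomes the product appearing in the definition of $g(T\setminus\{a\})$. As $\sigma$ ranges over all permutations with $\sigma(k)=j$, the assignment $\sigma\mapsto\tau$ is a bijection onto $S_{k-1}$, so the inner sum reproduces $g(T\setminus\{a\})$ once the signs are reconciled. (When $k=1$ the product over $i=1,\ldots,k-1$ is empty, matching the convention $g(\emptyset)=1$, so the boundary case is automatic.)

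The step I expect to require the most care is the sign bookkeeping. The inversions of $\sigma$ split into those confined to the first $k-1$ positions, which are preserved by the order-preserving relabeling and therefore equal the inversions of $\tau$, together with those involving position $k$, namely the positions $p<k$ with $\sigma(p)>\sigma(k)=j$. Since $\sigma(1),\ldots,\sigma(k-1)$ run over $\{1,\ldots,k\}\setminus\{j\}$, exactly $k-j$ of them exceed $j$, giving ${\rm sign}(\sigma)=(-1)^{k-j}{\rm sign}(\tau)$. Because $k-j=\vert\{t\in T:t>a\}\vert$, the sign factor is precisely $(-1)^{\vert\{t\in T,\,t>a\}\vert}$. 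Substituting this into the factored sum and summing over $j$, equivalently over $a\in T$, then yields the claimed identity.
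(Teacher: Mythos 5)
Your proof is correct and takes essentially the same approach as the paper: grouping the permutations $\sigma\in S_k$ by the last value $\sigma(k)=j$ and relabeling the first $k-1$ values via the order-preserving bijection onto $\{1,\ldots,k-1\}$ is exactly the inverse of the paper's bijection $\phi\colon S_{k-1}\times\{1,\ldots,k\}\to S_k$, with the identical sign factor $(-1)^{k-j}=(-1)^{\vert\{t\in T,\,t>a\}\vert}$. You merely spell out details the paper leaves implicit, namely pulling out the common factor $f(T)$ and the inversion count behind the sign identity.
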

\begin{proof} Let $k\in\mathbb{N}$, $0<k\leq n$ be given. Let $\phi\colon S_{k-1}\times\{1,\ldots,k\}\to S_k$, \begin{equation}\phi(\sigma\times a)(i)=\begin{cases}a&\text{if }i=k\\ \sigma(i)&\text{if }\sigma(i)<a\\ \sigma(i)+1&\text{if }\sigma(i)\geq a\end{cases}\end{equation}
$\phi$ is bijective and \begin{equation}\text{sign}(\phi(\sigma\times a))=\text{sign}(\sigma)(-1)^{\vert\{b\in\{1,\ldots,k\},b>a\}\vert}\end{equation}
Replacing $\sum\limits_{\sigma\in S_k}$ with $\sum\limits_{a\in\{1,\ldots,k\}}\sum\limits_{\sigma\in S_{k-1}}$ and $\sigma$ with $\phi(\sigma\times a)$ in (2) yields (4).\end{proof}

Let $e_1,\ldots,e_n$ be the canonical basis of $\mathbb{R}^n$. For $k\in\mathbb{N}$, $0\leq k\leq n$, $S\subset N$, $S=\{s_1,\ldots,s_k\}$, $s_1<\cdots<s_k$ let \begin{equation}\delta_S=e_{s_1}\wedge\cdots\wedge e_{s_k}\in\bigwedge^k\mathbb{R}^n\end{equation}\newline
Let $V=\bigoplus\limits_{k=0}^n\bigwedge^k\mathbb{R}^n$.\newline
The standard scalar product $\langle\cdot,\cdot\rangle$ on $V$ is defined such that $\{\delta_S, S\in\mathcal P(N), |S|=k\}$ is an orthonormal basis of $\bigwedge^k\mathbb{R}^n$ and $\{\delta_S, S\in\mathcal P(N)\}$ is an orthonormal basis of $V$.\newline
Let $P\in\text{End}(V)$ be the linear map with $P(\delta_S)=f(S)\delta_S$.\newline
For $k\in\mathbb{N}$, $0\leq k\leq n$, let $G_k=\{S\in\mathcal P(N),\vert S\vert=k\}$.\newline
Let $\alpha=\frac{1}{\sqrt{n}}(\delta_{\{1\}}+\cdots+\delta_{\{n\}})$.\newline
Let $R\in\text{End}(V)$, $\varphi\mapsto\varphi\wedge\alpha$ be the multiplication with $\alpha$.\newline
Identify $\bigwedge^k\mathbb{R}^n$ with $\mathbb{R}^{G_k}$ using \begin{equation}\delta_S(T)=\begin{cases}1&\text{if }S=T\\0&\text{if }S\neq T\end{cases}\end{equation}
Using this identification, we have $g\vert_{G_k}\in\bigwedge^k\mathbb{R}^n$.\newline
(4) is equivalent to \begin{equation}g\vert_{G_k}=\sqrt{n}(P(R(g\vert_{G_{k-1}})))\end{equation}

\begin{lemma}
For all $v_1,\ldots,v_k, w_1,\ldots,w_k\in\mathbb{R}^n$:
\begin{equation}
\langle v_1\wedge\cdots\wedge v_k, w_1\wedge\cdots\wedge w_k\rangle={\rm det}((\langle v_i,w_j\rangle)_{ij})
\end{equation}
\end{lemma}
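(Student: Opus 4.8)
The plan is to use that both sides are multilinear functions of the $2k$ input vectors, and then reduce to the case of standard basis vectors. On the left, $v_1\wedge\cdots\wedge v_k$ is multilinear (and alternating) in the $v_i$ and $w_1\wedge\cdots\wedge w_k$ is multilinear in the $w_j$, while $\langle\cdot,\cdot\rangle$ is linear in each slot; on the right, the determinant is multilinear in its rows and columns and each entry $\langle v_i,w_j\rangle$ is linear in $v_i$ and in $w_j$. Hence both sides are multilinear in $(v_1,\ldots,v_k,w_1,\ldots,w_k)$, and since $e_1,\ldots,e_n$ spans $\mathbb{R}^n$ it suffices to prove the identity when $v_i=e_{a_i}$ and $w_j=e_{b_j}$ for indices $a_i,b_j\in N$.

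For such basis vectors the right-hand side is $\det\big((\delta_{a_ib_j})_{ij}\big)$, where $\delta$ denotes the Kronecker symbol, and I would dispose of the degenerate cases first. If two of the $a_i$ coincide (or two of the $b_j$ coincide), then $e_{a_1}\wedge\cdots\wedge e_{a_k}=0$, so the left side vanishes, and the matrix has two equal rows (resp. columns), so the right side vanishes too. If all $a_i$ are distinct and all $b_j$ are distinct but $\{a_1,\ldots,a_k\}\neq\{b_1,\ldots,b_k\}$, then the two wedges are, up to sign, distinct orthonormal basis vectors $\delta_S$, so their scalar product is $0$; on the right, some $a_i$ equals no $b_j$, producing a zero row and hence a vanishing determinant.

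The remaining case, where the $a_i$ are distinct, the $b_j$ are distinct, and $\{a_1,\ldots,a_k\}=\{b_1,\ldots,b_k\}=:S$, is the heart of the matter. Writing $S=\{s_1,\ldots,s_k\}$ with $s_1<\cdots<s_k$, let $\alpha,\beta\in S_k$ be the permutations with $a_i=s_{\alpha(i)}$ and $b_j=s_{\beta(j)}$. Reordering each wedge into increasing form gives $e_{a_1}\wedge\cdots\wedge e_{a_k}=\mathrm{sign}(\alpha)\,\delta_S$, and likewise for the $b_j$, so the left side equals $\mathrm{sign}(\alpha)\mathrm{sign}(\beta)$. On the right, letting $\tau\in S_k$ be defined by $b_j=a_{\tau(j)}$, the matrix $(\delta_{a_ib_j})_{ij}$ is precisely the permutation matrix of $\tau$, whose determinant is $\mathrm{sign}(\tau)$. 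The main obstacle is the sign bookkeeping: I expect to verify that $\beta=\alpha\circ\tau$, whence $\mathrm{sign}(\tau)=\mathrm{sign}(\alpha)\mathrm{sign}(\beta)$ because $\mathrm{sign}$ is a homomorphism, closing the last case. (Alternatively, one could expand $v_1\wedge\cdots\wedge v_k$ in the basis $\delta_S$ with $k\times k$ minors as coefficients and invoke the Cauchy--Binet formula, but the reduction to basis vectors keeps the argument self-contained.)
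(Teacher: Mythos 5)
Your proof is correct and follows the same route as the paper: both arguments reduce the identity by multilinearity in each $v_i$ and $w_j$ to the case of standard basis vectors. The only difference is that the paper simply asserts the basis-vector case, whereas you verify it explicitly (including the sign bookkeeping via $\beta=\alpha\circ\tau$, which checks out), so your write-up is a correct, more detailed version of the same argument.
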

\begin{proof}
For each $i$, $1\leq i\leq k$, both sides of the equation are linear in $v_i$ and in $w_i$. The equation holds when $v_1,\ldots,v_k, w_1,\ldots,w_k\in\{e_1,\ldots,e_n\}$. Therefore, it holds for all $v_1,\ldots,v_k, w_1,\ldots,w_k\in\mathbb{R}^n$.
\end{proof}

\begin{proof}[Proof of the Theorem] From (9), it follows that $g(N)\delta_N=\sqrt n^n(P\circ R)^n\delta_\emptyset$. Therefore, using submultiplicativity of the operator norm: $|g(N)|\leq\sqrt n^n(|P|_{op}|R|_{op})^n$.\newline
$|P|_{op}\leq 1$ follows directly from the definition.\newline
Let $R^*$ be the adjoint of $R$. Choose an orthonormal basis of $\mathbb{R}^n$ which contains $\alpha$ (noting that $|\alpha|=1$) and expand it to a basis of $V$ like in (7). This expanded basis is orthonormal because of (10), therefore $R^*\circ R$ is the projection onto\newline$\{\varphi\in V,\varphi\wedge\alpha=0\}^\bot$, therefore $|R|_{op}\leq 1$.
\end{proof}

\noindent
{\it References:}
\vskip .2cm
\noindent
[1] J.Feldman, H.Kn\"orrer, E.Trubowitz: Construction of a $2-d$ Fermi
Liquid.
\noindent
In: XIV. International Congress on Mathematical Physics, pp.245-260.
Editor: J.-C. Zambrini.
World Scientific 2005

\end{document}